\newcommand{\rank}{\text{rank}}
\newcommand{\CA}{{\rm CA}}
\newcommand{\CAN}{{\rm CAN}}
\newcommand{\VCA}{{\rm VCA}}
\newcommand{\VCAN}{{\rm VCAN}}
\newcommand{\Z}{\mathbb{Z}}
\newcommand{\vardens}{{\sc VarDens\ }}
\newcommand{\ignore}[1]{}
\numberwithin{equation}{section}
\def\imod#1{\allowbreak\mkern10mu({\operator@font mod}\,\,#1)}
\numberwithin{equation}{section}
\newcommand{\rmv}[1]{}
\def\CA{\mathrm{CA}}
\def\CAN{\mathrm{CAN}}
\def\Z{\mathbb{Z}}
\newtheorem{theorem}{Theorem}[section]
  \newtheorem{lemma}[theorem]{Lemma}
  \newtheorem{definition}[theorem]{Definition}
\begin{document}

\title{Upper bounds on the sizes of variable strength covering arrays
using the Lov\'{a}sz local lemma}

\author{Lucia Moura\footnote{University of Ottawa, Ottawa, Canada, \href{mailto:lmoura@uottawa.ca}{\url{lmoura@uottawa.ca}}}, 
Sebastian Raaphorst\footnote{Gemini Observatory, La Serena, Chile, \href{mailto:sraaphorst@gmail.com}{\url{sraaphorst@gmail.com}}}
and Brett Stevens\footnote{Carleton University, Ottawa, Canada, \href{mailto:brett@math.carleton.ca}{\url{brett@math.carleton.ca}}} }

\date{January 15, 2019}
\maketitle

\begin{abstract}
Covering arrays are generalizations of orthogonal arrays that have been widely studied
and are used in software testing. The probabilistic method has been employed to derive 
upper bounds on the sizes of minimum covering arrays and give asymptotic upper
bounds that are logarithmic on the number of columns of the array.
This corresponds to test suites with a desired level of coverage of the parameter space
where we guarantee the number of test cases is logarithmic on the number of parameters of the system.
In this paper, we study variable strength covering arrays, a generalization of covering
arrays that uses a hypergraph to specify the sets of columns where coverage is required;
(standard) covering arrays is the special case where coverage is required for all sets of
columns of a fixed size $t$, its strength.
We use the probabilistic method to obtain upper bounds
on  the number of rows of a variable strength covering array, given in terms of parameters
of the hypergraph. We then compare this upper bound with
another one given by a density-based greedy algorithm on different types of hypergraph
such as $t$-designs, cyclic consecutive hypergraphs,
planar triangulation hypergraphs, and a more specific hypergraph given by a clique of higher 
strength on top of a ``base strength". The conclusions are dependent on the class of hypergraph,
and we discuss specific characteristics of the hypergraphs which are more amenable to using 
different versions of the Lov\'{a}sz local lemma.

\end{abstract}

Keywords: covering arrays, variable strength, Lov\'{a}sz local lemma, greedy algorithm, derandomization.





\section{Introduction}\label{intro}

Covering arrays are well studied combinatorial designs~\cite{colbourn04,crchandbook} in part because of their utility in software and network testing \cites{cohen96,dalal98,kuhn04}.  For more information about  covering arrays, including combinatorial constructions and overview on algorithms and applications, see the survey by Colbourn~\cite{colbourn04}.
In this paper, we 
focus on a recent covering array generalization called variable strength covering array (VCA).
We  begin defining VCAs and indicate how covering arrays are a special case. For more information on VCAs, see \cite{raaphorstphd,vca}. 
\begin{definition}
Let $H=(V,E)$ be a hypergraph and let $k = |V|$. A {\em variable-strength covering array}, denoted $\VCA(n; H, v)$, is an $n \times k$ array  filled from $\Z_v$ such that for each $e = \{v_0, \ldots, v_{t-1}\} \in E$, the $n \times t$ subarray of columns indexed by the elements of $e$ is {\em covered}, that is, it has every possible $t$-tuple in $\Z_v$ as a row at least once. The {\em variable-strength covering array number}, written $\VCAN(H, v)$, is the smallest $n$ such that a $\VCA(n; H, v)$ exists.
\end{definition}
Consider the complete $t$-uniform hypergraph on $k$ vertices, denoted $K_{k}^{(t)}$, that is, 
the hypergraph where $|V|=k$ and $E$ is the set of all $t$-subsets of $V$. 
A covering array of strength $t$, denoted $\CA(N;t,k,v)$, is precisely a  $\VCA(N;K_{k}^{(t)},v)$; the  covering array number is denoted $\CAN(t,k,v)$. 
In this article, we use the Lov\'{a}sz local lemma to give an upper  bound on $\VCAN(H,v)$.
\begin{theorem}[Lov\'{a}sz local lemma - symmetric case \cites{lovasz75,spencer77}]
\label{thm-prob-symlocallemma}
\index{local lemma!symmetric|textbf}
Let $\mathcal{A} = \{A_0, \ldots, A_{m-1}\}$ be a finite set of events in a probability space $\Omega$ such that $\Pr(A_i) \leq p < 1$, and each event is independent of all but at most $d$ of the other events. If $ep(d+1) \le 1$, where $e$ is the base of the natural logarithm, then the probability that none of the events occur is nonzero.
\end{theorem}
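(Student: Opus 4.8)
The plan is to deduce the symmetric statement from the general (asymmetric) form of the Lov\'{a}sz local lemma: if one can assign reals $x_0,\dots,x_{m-1}\in[0,1)$ to the events so that $\Pr(A_i)\le x_i\prod_j(1-x_j)$, where the product ranges over the at most $d$ indices $j$ on which $A_i$ genuinely depends, then $\Pr\!\big(\bigcap_i\overline{A_i}\big)\ge\prod_i(1-x_i)>0$. Granting this, the symmetric case follows by taking $x_i=\tfrac{1}{d+1}$ for every $i$ (assuming $d\ge 1$; the case $d=0$ is immediate, since then the events are mutually independent and $\Pr\!\big(\bigcap_i\overline{A_i}\big)=\prod_i(1-\Pr(A_i))\ge(1-p)^m>0$). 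Indeed $(1-\tfrac{1}{d+1})^d=(1+\tfrac1d)^{-d}>e^{-1}$ because $(1+\tfrac1d)^d<e$, and since $A_i$ depends on at most $d$ others the product $\prod_j(1-x_j)$ has at most $d$ factors each equal to $\tfrac{d}{d+1}<1$, so $x_i\prod_j(1-x_j)\ge\tfrac{1}{d+1}(1-\tfrac{1}{d+1})^{d}>\tfrac{1}{e(d+1)}\ge p\ge\Pr(A_i)$ under the hypothesis $ep(d+1)\le 1$; moreover $\prod_i(1-x_i)=(1-\tfrac{1}{d+1})^m>0$, giving the desired nonzero probability.

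To prove the general form I would first establish, by induction on $|S|$, that for every index $i$ and every set $S\subseteq\{0,\dots,m-1\}\setminus\{i\}$ (with the convention that we restrict to subfamilies on which all conditioning events have positive probability),
\[
\Pr\!\Big(A_i \;\Big|\; \bigcap_{j\in S}\overline{A_j}\Big)\le x_i .
\]
The base case $S=\emptyset$ is just the hypothesis $\Pr(A_i)\le x_i\prod_j(1-x_j)\le x_i$. For the inductive step, split $S$ into $S_1$, the indices of $S$ on which $A_i$ depends, and $S_2=S\setminus S_1$, and write
\[
\Pr\!\Big(A_i \;\Big|\; \bigcap_{j\in S}\overline{A_j}\Big)
= \frac{\Pr\!\big(A_i\cap\bigcap_{j\in S_1}\overline{A_j}\,\big|\,\bigcap_{j\in S_2}\overline{A_j}\big)}{\Pr\!\big(\bigcap_{j\in S_1}\overline{A_j}\,\big|\,\bigcap_{j\in S_2}\overline{A_j}\big)} .
\]
The numerator is at most $\Pr\!\big(A_i\,\big|\,\bigcap_{j\in S_2}\overline{A_j}\big)=\Pr(A_i)\le x_i\prod_j(1-x_j)$, using that $A_i$ is independent of the family indexed by $S_2$. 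The denominator is at least $\prod_{j\in S_1}(1-x_j)\ge\prod_j(1-x_j)$, obtained by ordering $S_1=\{j_1,j_2,\dots\}$, expanding the denominator as the telescoping product $\prod_\ell\big(1-\Pr(A_{j_\ell}\mid\overline{A_{j_1}}\cap\cdots\cap\overline{A_{j_{\ell-1}}}\cap\bigcap_{j\in S_2}\overline{A_j})\big)$, and applying the induction hypothesis to each factor, which is legitimate because each conditioning set is a proper subset of $S$. Dividing yields the claimed bound $\le x_i$.

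Finally, the chain rule gives
\[
\Pr\!\Big(\bigcap_{i=0}^{m-1}\overline{A_i}\Big)
= \prod_{i=0}^{m-1}\Pr\!\Big(\overline{A_i}\;\Big|\;\bigcap_{j<i}\overline{A_j}\Big)
= \prod_{i=0}^{m-1}\Big(1-\Pr\!\big(A_i\,\big|\,\bigcap_{j<i}\overline{A_j}\big)\Big)
\ge\prod_{i=0}^{m-1}(1-x_i)>0 ,
\]
which completes the argument. The one genuinely delicate point is the lower bound on the denominator in the inductive step: one must fix an order on $S_1$, peel its elements off one at a time, and verify at each stage that the conditioning event is a proper subset of $S$ so that the induction hypothesis applies; the rest is bookkeeping with conditional probabilities and the elementary inequality $(1+\tfrac1d)^d<e$.
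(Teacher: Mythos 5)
The paper does not prove this statement at all: it is quoted as a classical result with citations to Lov\'{a}sz and Spencer, so there is no in-paper argument to compare against. Your proof is the standard and correct one: deduce the symmetric case from the general form (which the paper itself states later as Theorem~\ref{thm-prob-generallocallemma}) by taking $x_i=\frac{1}{d+1}$ and using $(1+\frac1d)^d<e$, and prove the general form by the usual induction on the size of the conditioning set, splitting $S$ into the dependent part $S_1$ and the independent part $S_2$, bounding the numerator by independence and the denominator by a telescoping product of factors each at least $1-x_j$. The chain of inequalities $\frac{1}{d+1}\bigl(1-\frac{1}{d+1}\bigr)^d>\frac{1}{e(d+1)}\ge p$ is right, and your handling of $d=0$ and of the positivity of the conditioning events is adequate. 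The only interpretive point worth flagging is that your step $\Pr\bigl(A_i\mid\bigcap_{j\in S_2}\overline{A_j}\bigr)=\Pr(A_i)$ uses \emph{mutual} independence of $A_i$ from the events outside its dependency set, which is the intended (and standard) reading of the phrase ``independent of all but at most $d$ of the other events'' and is exactly how the paper formalizes dependence via $\Gamma$ in Theorem~\ref{thm-prob-generallocallemma}; pairwise independence alone would not suffice.
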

For standard covering arrays, the local lemma has been used \cite{godbole96} to prove 
\[\CAN(t,k,v) \le \frac{(t-1) \log k}{\log \frac{v^t}{v^t-1}} (1+o(1)).\]
More recently, this local lemma technique and the deterministic analogue, often called {\em entropy compression} \cite{moser2010}, has improved the coefficient of the leading term in this bound for all $v$ when $t=3$ and for $v \leq 10$  when $4 \leq t \leq 6$, by using columns that have balanced numbers of symbols \cites{MR3328867,francetic2016}.  Sarkar and Colbourn match and extend these improvements by exploiting permutation groups with the use of the local lemma \cite{sarkar_upper_2017}. They also combine the local lemma, permutation groups, graph colouring techniques and a density-based greedy approach into two-stage methods which further improve covering array upper bounds \cite{colbourn_two-stage_nodate_alt}. In \cite{colbourn_asymptotic_nodate} together with Lanus they use the local lemma to construct {\em covering perfect hash families} which are then used to construct covering arrays; covering perfect hash families are smaller than covering arrays and thus can be more efficiently generated.  These constructions are related to covering arrays constructed from linear feedback shift register sequences \cites{raaphorst2012,MR3433921}.

Probabilistic methods have been previously used in the context of
variable strength covering arrays.  In
\cites{godbole2011,godbole2010cca} probabilistic methods, but not
explicitly the local lemma, are used to derive bounds on variable
strength covering arrays for {\em consecutive hypergraphs} which are
similar to the cyclic consecutive hypergraphs we discuss in
Section~\ref{sec-cc-triangle} but without the edges wrapping from the
end of the vertex set to the beginning. Sarkar et al. \cite{MR3808633} use the local lemma to construct {\em partial covering arrays} which cover at least $1-\epsilon$ of the possible $t$-sets of columns.  The main difference between these and variable strength covering arrays is that variable strength covering arrays specify exactly which $t$-sets of columns will be covered and partial covering arrays only specify that a certain proportion of $t$-sets must be covered, without constraining which ones they are.

\subsection{Main Result}

For a hypergraph $H$, let the rank of $H$, denoted $rank(H)$, be the largest cardinality of an edge in $H$.
\begin{theorem}
\label{theorem-main}
Let $H=(V,E)$ be a hypergraph with $\rank(H) = t\geq 1$, and let $d$ be an integer such that no edge of $H$ intersects more than $d$ other edges of $H$. Then, for any $v \ge 2$, we have:
\begin{eqnarray}
\VCAN(H, v) &\le& \left\lceil \frac{\ln(d+1) + t \ln v + 1}{\ln \frac{v^t}{v^t-1}}\right\rceil \label{eqprob}\\
&\le& \left\lceil  v^t \left(\ln(d+1) + t \ln v + 1\right)\right\rceil.
\end{eqnarray}
\end{theorem}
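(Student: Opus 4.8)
The plan is to apply the symmetric Lov\'{a}sz local lemma (Theorem~\ref{thm-prob-symlocallemma}) to a random array. First I would construct the probability space: let $N$ be the candidate number of rows (to be determined), and fill an $N \times k$ array independently and uniformly at random with symbols from $\Z_v$, where $k = |V|$. For each edge $e \in E$, let $A_e$ be the ``bad'' event that the $N \times |e|$ subarray indexed by $e$ is \emph{not} covered, i.e.\ some $|e|$-tuple over $\Z_v$ fails to appear as a row. If we can show $\Pr\left(\bigcap_e \overline{A_e}\right) > 0$, then some assignment makes every edge covered, which is exactly a $\VCA(N; H, v)$, so $\VCAN(H,v) \le N$.

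Next I would bound $\Pr(A_e)$. Fix an edge $e$ with $|e| = s \le t$. For a fixed $s$-tuple $x \in \Z_v^s$, the probability that no row of the subarray equals $x$ is $(1 - v^{-s})^N$, and a union bound over the $v^s$ tuples gives $\Pr(A_e) \le v^s (1 - v^{-s})^N$. The slightly delicate point is that $s$ may be strictly less than $t$, so I would want to check that the worst case over $s \in \{1,\dots,t\}$ is $s = t$; since $v^s(1-v^{-s})^N$ is increasing in $s$ for the relevant range (larger blocks are harder to cover), it suffices to use $p := v^t(1 - v^{-t})^N = v^t\left(\frac{v^t-1}{v^t}\right)^N$ as a uniform upper bound on all $\Pr(A_e)$.

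For independence, note that $A_e$ is determined by the entries in the columns indexed by $e$, and since the entries are mutually independent, $A_e$ is independent of the collection $\{A_f : f \cap e = \emptyset\}$. By hypothesis each edge meets at most $d$ others, so each $A_e$ is independent of all but at most $d$ of the remaining events; this is exactly the dependency bound needed. The local lemma then guarantees $\Pr\left(\bigcap_e \overline{A_e}\right) > 0$ provided $e \cdot p \cdot (d+1) \le 1$, that is,
\[
e \cdot v^t \left(\frac{v^t-1}{v^t}\right)^N (d+1) \le 1.
\]
Taking natural logarithms, this rearranges to $N \ln\frac{v^t}{v^t-1} \ge \ln(d+1) + t\ln v + 1$, i.e.\ $N \ge \dfrac{\ln(d+1) + t\ln v + 1}{\ln\frac{v^t}{v^t-1}}$. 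Choosing $N = \left\lceil \dfrac{\ln(d+1) + t\ln v + 1}{\ln\frac{v^t}{v^t-1}}\right\rceil$ satisfies the inequality and yields \eqref{eqprob}. Finally, for the second bound I would use the elementary inequality $\ln\frac{v^t}{v^t-1} = -\ln\left(1 - v^{-t}\right) \ge v^{-t}$, so $\frac{1}{\ln\frac{v^t}{v^t-1}} \le v^t$, and the ceiling of a smaller quantity is at most the ceiling of the larger, giving $\VCAN(H,v) \le \left\lceil v^t\left(\ln(d+1) + t\ln v + 1\right)\right\rceil$.

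The main obstacle is the small bookkeeping around edges of size less than $t = \rank(H)$: one must justify that replacing each $|e|$ by $t$ in the failure probability is legitimate (i.e.\ monotonicity of $v^s(1-v^{-s})^N$ in $s$), so that a single $p$ works in the symmetric local lemma. Everything else — the union bound, the independence observation, and the logarithmic rearrangement — is routine.
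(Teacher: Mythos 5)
Your proposal is correct and follows essentially the same route as the paper: a uniformly random $N\times k$ array, bad events $A_e$ bounded by $p=v^t\left(\frac{v^t-1}{v^t}\right)^N$ (the monotonicity in $s=|e|$ you flag is immediate since both $v^s$ and $1-v^{-s}$ increase with $s$), the symmetric local lemma with dependency degree $d$, and the elementary bound $\left(\ln\frac{v^t}{v^t-1}\right)^{-1}\le v^t$ for the second inequality. No gaps.
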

\begin{proof}
\ignore{
Let $k = |V|$. Fill an $N \times k$ array, $M$, uniformly at random from $\Z_v$. For each edge $e$, let $A_e$ be the event that the subarray over the columns of $e$ is not covered. The probability that this happens is bounded above by
$p = v^t\left(\frac{v^t-1}{v^t}\right)^N.$
Applying  Theorem~\ref{thm-prob-symlocallemma} we obtain that if
$
  N \geq \frac{\ln(d+1) + t \ln v + 1}{\ln \frac{v^t}{v^t-1}},
 $
 then a $VCA(N,H,v)$ exists.  This gives the first inequality;  the second one is derived by truncating the Taylor expansion of the denominator.}
Let $k = |V|$. Consider a randomly generated $N \times k$ array $M$ with entries chosen from $\Z_v$ with uniform probability. For each edge $e \in E$, write $s = |e|$, and associate an event $A_e$ that the $N \times s$ subarray of $M$ formed by the columns corresponding to $e$ is missing one or more of the $v^s$ $s$-tuples of $\Z_v^s$ as a row. Define:
\[p = v^t\left(\frac{v^t-1}{v^t}\right)^N.\]
Since $s \leq t$, we have that
\[\Pr(A_e) \le v^s\left(\frac{v^s-1}{v^s}\right)^N \le v^t\left(\frac{v^t-1}{v^t}\right)^N = p.\]

We apply the symmetric Local Lemma as in Theorem~\ref{thm-prob-symlocallemma}, which states that if $ep(d+1) \le 1$, the probability that none of the events occur is positive, and hence there is some $N \times k$ array that is a $\VCA(N; H, v)$. This happens when:
\begin{equation*}
N \geq \frac{\ln(d+1) + t \ln v + 1}{\ln \frac{v^t}{v^t-1}}.
\end{equation*}
Thus, we have:
\[\VCAN(H, v) \le \frac{\ln(d+1) + t \ln v + 1}{\ln \frac{v^t}{v^t-1}}.\]
If we use the approximation
\[
\left(\ln \frac{m}{m-1}\right)^{-1} < m,
\]
for $m > 1$ from the Taylor series expansion, we can rewrite the equation as follows:
\[\VCAN(H, v) \le v^t\left(\ln(d+1) + t \ln v + 1\right).\]

\end{proof}

Theorem~\ref{theorem-main} can be applied to any hypergraph $H$ and thus gives a very general existence result for VCAs with arbitrary parameters.  
The rest of the paper focuses on comparing the upper bound given by Theorem~\ref{theorem-main} with a constructive upper bound for VCAs obtained by a density-based greedy algorithm 
called \vardens introduced in~\cite{raaphorstphd,vcadbga} and given next.
This method is a generalization of the density method of Bryce and Colbourn~\cite{bryce09} for dealing with variable strength.
\begin{theorem} \label{thm-vcadbga-log}~\cite{raaphorstphd,vcadbga}
Let $H$ be a hypergraph  such that  $rank(H)=t$ and $e=|E(H)|$. 
Algorithm \vardens  returns a $\VCA(N; H, v)$ where $N$ satisfies
\begin{eqnarray}
\VCAN(H, v) \leq N &\leq& \left\lceil \frac{\ln e + t\ln v}{\ln \frac{v^t}{v^t-1}} \right \rceil \label{eqdens}\\ 
&\leq& \left\lceil v^t (\ln e + t\ln v)\right \rceil.
\end{eqnarray}
\end{theorem}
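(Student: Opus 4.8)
The plan is to analyze \vardens as a one-row-at-a-time greedy procedure controlled by a potential function, in the spirit of the set-cover style density method of Bryce and Colbourn \cite{bryce09} and generalized to hypergraph-indexed coverage as in \cite{raaphorstphd,vcadbga}. Call a pair $(g,\tau)$, with $g\in E$ and $\tau\in\Z_v^{|g|}$, a \emph{demand}; say it is \emph{met} once $\tau$ occurs as a row of the subarray on the columns of $g$. Let $U_j$ denote the number of unmet demands after $j$ rows have been placed. Since $\rank(H)=t$, initially $U_0=\sum_{g\in E}v^{|g|}\le e\,v^t$, and the procedure outputs a $\VCA(N;H,v)$ precisely when $U_N=0$. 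Hence it suffices to produce $N$ rows forcing $U_N<1$.

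The engine is the usual averaging (density) step. Given the current partial array, adjoin one more row drawn uniformly from $\Z_v^k$: an unmet demand $(g,\tau)$ becomes met with probability $v^{-|g|}\ge v^{-t}$, so the expected number of newly met demands is $\sum_{(g,\tau)\ \text{unmet}}v^{-|g|}\ge U_j v^{-t}$. Therefore some row meets at least $U_j v^{-t}$ demands, and this is what the row produced by \vardens achieves: either it selects a maximum-density row directly, or, in the derandomized version, it builds the row entry by entry by the method of conditional expectations, each entry maximizing the conditional expected number of newly met demands (the ``density'' of the partial row), so that the completed row meets at least the average number $U_j v^{-t}$. Consequently $U_{j+1}\le U_j(1-v^{-t})$, and by induction $U_N\le U_0(1-v^{-t})^N\le e\,v^t(1-v^{-t})^N$.

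To close the argument, choose $N$ with $e\,v^t(1-v^{-t})^N<1$; taking logarithms, this holds as soon as $N\ln\frac{v^t}{v^t-1}>\ln e+t\ln v$, that is, for $N\ge\bigl\lceil(\ln e+t\ln v)/\ln\frac{v^t}{v^t-1}\bigr\rceil$, which is the first displayed inequality, and then $U_N<1$ together with $U_N\in\Z_{\ge 0}$ gives $U_N=0$, so the output is a VCA. The second inequality follows exactly as in the proof of Theorem~\ref{theorem-main}, from the Taylor-series bound $\bigl(\ln\frac{m}{m-1}\bigr)^{-1}<m$ applied with $m=v^t$ and monotonicity of $\lceil\cdot\rceil$. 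Finally $\VCAN(H,v)\le N$ is immediate, since \vardens has produced a $\VCA(N;H,v)$.

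The step needing the most care — and the substantive content imported from \cite{raaphorstphd,vcadbga} — is checking that the density function used by \vardens and its greedy/conditional-expectation rule genuinely realize the unconditional expectation $U_j v^{-t}$ at every step; the counting argument above is only the shadow of what the algorithm actually computes. Two minor points: edges of size $s<t$ are harmless because their per-step reduction factor $1-v^{-s}\le 1-v^{-t}$ only helps, so bounding everything by $\rank(H)$ is legitimate; and the proof is phrased via the strict inequality $U_N<1$ to absorb the usual off-by-one subtlety when the displayed bound happens to be an exact integer.
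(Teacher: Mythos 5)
The paper itself does not prove this theorem: it is quoted from \cite{raaphorstphd,vcadbga}, and the proof there is precisely the density/derandomized-greedy potential argument you reconstruct (the generalization of Bryce--Colbourn to hypergraph-indexed coverage). Your argument is correct in substance and matches that approach: the averaging step, the conditional-expectation rule guaranteeing that each appended row meets at least $U_j v^{-t}$ unmet demands, the recursion $U_{j+1}\le U_j(1-v^{-t})$, the handling of edges of size $s<t$, and the Taylor-series step for the second inequality are all right, and you appropriately attribute to the cited works the verification that the \vardens density function realizes the expectation.

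One small caveat: your phrase ``$N\ln\frac{v^t}{v^t-1}>\ln e+t\ln v$, that is, for $N\ge\bigl\lceil(\ln e+t\ln v)/\ln\frac{v^t}{v^t-1}\bigr\rceil$'' is not an equivalence when the ratio is exactly an integer; in that case taking $N$ equal to the ceiling only yields $U_N\le 1$, not $U_N<1$. A divisibility check shows this can occur only if $(v^t/(v^t-1))^N=e\,v^t$ for integers, which (since $\gcd(v^t,v^t-1)=1$) forces $v^t-1=1$, i.e.\ $v=2$, $t=1$. In that degenerate case the quoted bound itself can fail (a single singleton edge with $v=2$ has $\VCAN=2$, while the ceiling equals $1$), so the issue sits in the boundary of the statement as quoted rather than in your argument; for $v^t\ge 3$ your proof goes through verbatim, and you did flag the subtlety explicitly.
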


\ignore{ 
To run this algorithm can require the computation and storage of a large amount of information.  Sarkar and Colbourn adapt the method to a 2-stage algorithm involving a completely random first stage followed by a deterministic second stage and show how this reduces computation and storage significantly \cite{sarkar_upper_2017}.  They also combine this approach with the local lemma, permutation groups, graph colouring techniques and covering perfect hash families \cites{colbourn_two-stage_nodate_alt, colbourn_asymptotic_nodate}.
}

For the remainder of the paper we compare the bounds given by the probabilistic method and by \vardens algorithm.
Let $N_{prob}(H,v)$ be the upper bound given by the probabilistic method (the right hand side of equation~(\ref{eqprob}) in Theorem~\ref{theorem-main}), and let $N_{dens}(H,v)$  be the upper bound given by \vardens algorithm (the 
right hand side of equation~(\ref{eqdens}) in Theorem~\ref{thm-vcadbga-log}). That is, we denote
\begin{eqnarray*}
N_{prob}(H,v) &:=& \left\lceil \frac{\ln(d+1) + t \ln v + 1}{\ln \frac{v^t}{v^t-1}}\right\rceil,\\
N_{dens}(H,v) &:=& \left\lceil \frac{\ln e + t\ln v}{\ln \frac{v^t}{v^t-1}} \right \rceil.
\end{eqnarray*}

We note that if absolutely nothing is known about the hypergraph $H$ except the number of edges $e$ and the rank $t$, then we can substitute $d \leq e-1$, into $N_{prob}(H,v)$  obtaining an upper bound quite close to $N_{dens}(H,v)$,
specially as we fix $v$ and $t$ and let $e$ grow.

In the rest of the article, we show that when we know better estimates on $d$, $N_{prob}(H,v)$ outperforms $N_{dens}(H,v)$.
In Section~\ref{sec-designs}, we look at hypergraphs that are combinatorial designs.  
In Section~\ref{sec-cc-triangle}, we study two families of hypergraphs (cyclic consecutive hypergraphs 
and planar triangulations) where we know the exact $\VCAN$ to assess how well each of these methods perform.
In Section~\ref{sec-gen-ll}, we consider other versions of the Lov\'{a}sz local lemma,
namely the asymmetric and the general cases, and exemplify challenges and benefits to
using them. An extended abstract containing the main results and discussions in Sections~\ref{intro}-\ref{sec-cc-triangle} appeared in~\cite{cai-ea}.

\section{VCAs over designs}
\label{sec-designs}

Combinatorial designs can be used to obtain hypergraphs that have a great deal of regularity, which can be exploited to determine the number of dependent events.
\begin{definition}
  An $s$-$(k,t,\lambda)$ design is a collection $\mathcal{B}$ of $t$-subsets (called blocks) of a $k$-set $V$ with the property that any $s$-subset of points from $V$ appear in exactly $\lambda$ blocks of $\mathcal{B}$.
\end{definition}
 \noindent For a fixed $s$, $t$ and $\lambda$, these designs are known to exist for all sufficiently large $k$ that satisfy the necessary conditions \cite{1401.3665}.

 We begin by counting the number of blocks in a $s$-$(k,t,\lambda)$ design that intersect a fixed block.
\begin{lemma}
\label{inclusion-exclusion}
Let $\mathcal{B}$ be an $s$-$(k,t,\lambda)$ design. Then for any $B \in \mathcal{B}$:
\[d = |\{B' \in \mathcal{B} \setminus \{B\} : B \cap B' \neq \emptyset\}| \le \sum_{i=1}^{2\lfloor\frac{s-1}{2}\rfloor+1} (-1)^{i+1} {t \choose i} \left(\frac{\lambda {k-i \choose s-i}}{{t-i \choose s-i}} - 1\right).\]
When $s=t-1$ and the design has no repeated blocks,
\[d = \sum_{i=1}^{t-1} (-1)^{i+1} {t \choose i} \left(\frac{\lambda {k-i \choose (t-1)-i}}{t-i} - 1\right).\]
Furthermore, for any $1 \le m \le s$, $m$ odd, we can truncate the summation after $m$ terms to derive an upper bound on the summation.
\end{lemma}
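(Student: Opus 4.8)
The plan is to read the bound off from the Bonferroni inequalities applied to an inclusion--exclusion count, using the classical fact that an $s$-$(k,t,\lambda)$ design is simultaneously an $i$-design for every $0\le i\le s$. First I would record that every $i$-subset of $V$ with $i\le s$ lies in exactly $\lambda_i:=\lambda\binom{k-i}{s-i}/\binom{t-i}{s-i}$ blocks; this is a one-line double count of the pairs $(S,B')$ with $|S|=s$ and $X\subseteq S\subseteq B'$ for a fixed $i$-set $X$ (counted through $S$ there are $\lambda\binom{k-i}{s-i}$ of them, and counted through $B'$ there are $\lambda_i\binom{t-i}{s-i}$, forcing the stated value and its independence of $X$). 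In particular $\lambda_s=\lambda$, and when $s=t-1$ this specializes to $\lambda_i=\lambda\binom{k-i}{(t-1)-i}/(t-i)$.

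Next I would fix a block $B\in\mathcal{B}$ and, for each point $x\in B$, set $\mathcal{B}'_x=\{B'\in\mathcal{B}\setminus\{B\}: x\in B'\}$, so that $d=|\bigcup_{x\in B}\mathcal{B}'_x|$ is exactly the quantity to be estimated. For a nonempty $I\subseteq B$ with $|I|=i\le s$, the set $\bigcap_{x\in I}\mathcal{B}'_x$ consists of the blocks other than $B$ containing $I$; since $I\subseteq B$ there are $\lambda_i$ blocks through $I$, exactly one of which is $B$, so this intersection has size $\lambda_i-1$. Grouping the inclusion--exclusion formula $d=\sum_{\emptyset\ne I\subseteq B}(-1)^{|I|+1}|\bigcap_{x\in I}\mathcal{B}'_x|$ by the size $i=|I|$, each group with $i\le s$ contributes $(-1)^{i+1}\binom{t}{i}(\lambda_i-1)$, whereas the groups with $i>s$ are not in closed form in general.

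Finally I would invoke the Bonferroni inequalities: truncating this alternating sum after an odd number of terms gives an upper bound on $d$. Since only the terms with $i\le s$ are evaluable, the largest usable odd truncation point is $m=2\lfloor(s-1)/2\rfloor+1$ (the largest odd integer not exceeding $s$), which yields the first displayed inequality, and truncating after any smaller odd $m$ is still a valid upper bound, giving the ``furthermore'' clause. For the equality statement, when $s=t-1$ the only subset $I\subseteq B$ with $|I|>s$ is $B$ itself, and if there are no repeated blocks then $\bigcap_{x\in B}\mathcal{B}'_x=\{B'\in\mathcal{B}\setminus\{B\}:B\subseteq B'\}=\emptyset$, since any block containing the $t$-set $B$ must itself be the $t$-set $B$; thus the $i=t$ term vanishes, the expansion is exact already after the $(t-1)$-st term, and the claimed equality follows with $\lambda_i=\lambda\binom{k-i}{(t-1)-i}/(t-i)$. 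I do not expect a genuine obstacle here: the only care needed is bookkeeping, namely getting the direction of the Bonferroni inequality right (odd truncation overcounts a union) and the parity arithmetic for $m$.
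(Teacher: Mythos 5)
Your proposal is correct and follows essentially the same route as the paper: inclusion--exclusion over the nonempty subsets of $B$, the double-count giving $\lambda_i = \lambda\binom{k-i}{s-i}/\binom{t-i}{s-i}$ for $i\le s$, Bonferroni truncation at the largest odd index not exceeding $s$, and the observation that for $s=t-1$ with no repeated blocks the $i=t$ term vanishes so the formula is exact. The only difference is presentational (you spell out the double count and the union $\bigcup_{x\in B}\mathcal{B}'_x$ more explicitly than the paper does), not mathematical.
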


\begin{proof}
  Let $K$ be the point set of the design.  To count the number of blocks that intersect $B$ in at least one point we use the inclusion-exclusion principle.  Let $\lambda_{U}$ be the number of blocks that contain a set $U \subseteq K$.  The inclusion-exclusion principle states that the number of blocks that intersect $B \in \mathcal{B}$ in at least one point is precisely
  \[
 d = \sum_{i = 1}^{t} (-1)^{i+1} \sum_{\scriptsize \begin{array}{c}U \subset B\\ |U| = i\end{array}} \left (\lambda_{U} -1 \right )
  \]
  where the 1 is subtracted to not count $B$ itself.  Whenever the outer summation is truncated ending with a positive term an upper bound is achieved.

  For $|U| \leq s$, $\lambda_U$ can be computed from the parameters of the design.  Each of the ${t \choose i}$ $i$-sets $U \subseteq B$, occurs in exactly ${k-i \choose s-i}$ $s$-sets each of which appears $\lambda$ times amongst the blocks of $\mathcal{B}$. Each block of $\mathcal{B}$ which contains $U$, contains exactly ${t-i \choose s-i}$ $s$-sets containing $U$, and hence $\lambda_{U} = \lambda \frac{{k-i \choose s-i}}{{t-i \choose s-i}}$ blocks of $\mathcal{B}$.  For $s+1 \leq |U| \leq t$, $\lambda_U$ cannot be derived from just the parameters of the design.  If there are repeated blocks then $\lambda_{B}$ itself may be more than one.  Thus when $s < t-1$, or repeated blocks are permitted, the value of $d$ will depend on the particular design and not just on the parameters.  So we calculate an upper bound on $d$, by truncating the inclusion-exclusion after a positive term. We stop at the largest odd integer no larger than $s$, that is $2\lfloor \frac{s-1}{2} \rfloor + 1$.

  When $s = t-1$ and there are no repeated blocks, every term in the summation can be computed from the parameters of the design and the computation is exact.
\end{proof}

\begin{theorem}[Bound Comparison for $\VCA$ over $s$-$(k,t,\lambda)$ designs]
\label{thm-prob-designllbound}
\ \\
Let $\mathcal{B}$ be an $s$-$(k,t,\lambda)$ design. 
Then,
\[N_{prob}(\mathcal{B},v)= \left\lceil \frac{\ln \left(1+\sum_{i=1}^{2\lfloor\frac{s-1}{2}\rfloor+1} (-1)^{i+1} {t \choose i} \left(\frac{\lambda {k-i \choose s-i}}{{t-i \choose s-i}} - 1\right) \right ) + t \ln v + 1}{\ln \frac{v^t}{v^t-1}}\right\rceil,\]

\noindent and $N_{dens}(\mathcal{B},v)= \left\lceil \frac{\ln b + t \ln v}{\ln \frac{v^t}{v^t-1}}\right\rceil.$ Furthermore, for $s$, $t$, $\lambda$, and $v$ fixed, as $k \rightarrow \infty$, we get
\[N_{prob}(\mathcal{B},v)=(s-1) v^t \ln k + O(1)\;\; \mbox{ and }\;\;   N_{dens}(\mathcal{B},v)=s v^t \ln k + O(1).\]
\end{theorem}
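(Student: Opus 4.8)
The plan is to establish the two explicit closed forms for $N_{prob}$ and $N_{dens}$ first, and then perform the asymptotic analysis as $k\to\infty$. The closed form for $N_{dens}(\mathcal{B},v)$ is immediate: by definition $e = |E(H)| = b$, the number of blocks of the design, so substituting into the formula for $N_{dens}$ gives exactly the stated expression. For $N_{prob}(\mathcal{B},v)$, I would invoke \Cref{inclusion-exclusion}, which gives an upper bound (valid because we truncate the inclusion-exclusion sum after a positive term at index $2\lfloor\frac{s-1}{2}\rfloor+1$) on the number $d$ of blocks meeting a fixed block. Since $N_{prob}$ is monotone increasing in $d$, substituting this bound for $d$ into the definition of $N_{prob}(H,v)$ yields the displayed formula; strictly speaking the ``$=$'' here should be read as the definition of $N_{prob}(\mathcal{B},v)$ using the best bound on $d$ from \Cref{inclusion-exclusion}, which is the convention already used in the paper.

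For the asymptotic statements, fix $s$, $t$, $\lambda$, $v$ and let $k\to\infty$. The denominator $\ln\frac{v^t}{v^t-1}$ is a positive constant, so it suffices to analyze the numerators. For $N_{dens}$: the number of blocks of an $s$-$(k,t,\lambda)$ design is $b = \lambda\binom{k}{s}/\binom{t}{s}$, which is $\Theta(k^s)$, so $\ln b = s\ln k + O(1)$, and the $t\ln v$ term is $O(1)$; dividing by the constant denominator and absorbing the $O(1)$ ceiling gives $N_{dens}(\mathcal{B},v) = \frac{s}{\ln\frac{v^t}{v^t-1}}\ln k + O(1) = s\,v^t\ln k + O(1)$ — wait, this requires care: $\frac{1}{\ln\frac{v^t}{v^t-1}}$ is \emph{not} equal to $v^t$, only bounded above by it. I would therefore double-check whether the intended claim uses $v^t$ as shorthand for this constant; in any case the argument structure is the same, with the leading coefficient being $s/\ln\frac{v^t}{v^t-1}$.

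For $N_{prob}$: inside the logarithm we have $1 + \sum_{i=1}^{2\lfloor\frac{s-1}{2}\rfloor+1}(-1)^{i+1}\binom{t}{i}\bigl(\frac{\lambda\binom{k-i}{s-i}}{\binom{t-i}{s-i}}-1\bigr)$. The key observation is that each term in the sum has $\binom{k-i}{s-i} = \Theta(k^{s-i})$, so the $i=1$ term dominates: it is $\binom{t}{1}\cdot\frac{\lambda\binom{k-1}{s-1}}{\binom{t-1}{s-1}} + O(k^{s-2}) = \Theta(k^{s-1})$, provided $s\geq 2$ (if $s=1$ the sum is empty and everything is $O(1)$, consistent with a leading term of $(s-1)v^t\ln k = 0$). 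Hence the argument of the outer logarithm is $\Theta(k^{s-1})$, so $\ln(\cdots) = (s-1)\ln k + O(1)$, and adding the $O(1)$ contributions and dividing by the constant denominator gives $N_{prob}(\mathcal{B},v) = \frac{s-1}{\ln\frac{v^t}{v^t-1}}\ln k + O(1)$. The main obstacle, and the point I would be most careful about, is twofold: confirming that the leading ($i=1$) term of the alternating inclusion-exclusion sum is not cancelled — it is not, since each lower-order term contributes a strictly smaller power of $k$ — and clarifying the precise sense in which the coefficient is written as $(s-1)v^t$ versus $(s-1)/\ln\frac{v^t}{v^t-1}$; everything else is routine estimation of binomial coefficients plus absorbing ceilings into the $O(1)$ error.
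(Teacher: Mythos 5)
Your proposal is correct and follows essentially the same route as the paper, whose entire proof is a one-line appeal to Lemma~\ref{inclusion-exclusion} to bound $d$ in Theorem~\ref{theorem-main}, to Theorem~\ref{thm-vcadbga-log} with $e=b=\lambda\binom{k}{s}/\binom{t}{s}$, and to the Taylor series of the denominator for the asymptotics; your identification of the dominant $i=1$ term of order $k^{s-1}$ (with no cancellation, and the degenerate case $s=1$ handled separately) is exactly the intended ``follows easily'' step. Your caveat about $v^t$ versus $1/\ln\frac{v^t}{v^t-1}$ is well taken: the paper silently replaces the latter by the former via the Taylor expansion, and since $1/\ln\frac{v^t}{v^t-1}=v^t-\tfrac12+O(v^{-t})$ this substitution actually introduces a $\Theta(\ln k)$ discrepancy, so the stated asymptotics are only exact with the constant $1/\ln\frac{v^t}{v^t-1}$ in place of $v^t$ (or with a $(1+o(1))$ factor), precisely as you observe.
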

\begin{proof} The first equality comes from Lemma~\ref{inclusion-exclusion} to bound $d$ in Theorem~\ref{theorem-main}, the second equality comes from Theorem~\ref{thm-vcadbga-log} and the asymptotic results follow easily from these using the Taylor series of the denominator.\end{proof}

We conclude that for designs, $N_{prob}(\mathcal{B},v)$ is asymptotically better than $N_{dens}(\mathcal{B},v)$ as it reduces the coefficient for the leading term $\ln k$
by $v^t$.

When $s = t-1$ and the design has no repeated blocks we can take advantage of the equality in Lemma~\ref{inclusion-exclusion}. Table~\ref{steiner-systems} gives the bounds from Theorem~\ref{thm-prob-designllbound} for $(t-1)$-$(k,t,1)$ designs for $3 \leq t \leq 6$. When $t \leq 6$, explicit designs are known \cite{crchandbook}. Although for small $k$, $N_{dens}$ may outperform $N_{prob}$, as $k$ grows even modestly, $N_{prob}$ becomes better.

\begin{table}[!htbp]
\caption{VCA upper bounds on $(t-1)$-$(k,t,1)$ designs.    \label{steiner-systems}}
  \[ \hspace{-1.5cm}
    {\tiny
    \begin{array}{|c|l|l|l|l|}
      \hline
      & $2$-$(k,3,1)$\mbox{ designs}& $3$-$(k,4,1)$\mbox{ designs} & $4$-$(k,5,1)$\mbox{ designs}& $5$-$(k,6,1)$\mbox{ designs} \\ \hline
      \mbox{ $N_{prob}$} & \frac{\ln\left(\frac{3}{2}k-\frac{7}{2}\right) + 3 \ln v + 1}{\ln \frac{v^3}{v^3-1}} & \frac{\ln\left(\frac{2}{3}k^2 - \frac{11}{3}k + 9\right) + 4 \ln v + 1}{\ln \frac{v^4}{v^4-1}} &       \frac{\ln\left(\frac{5}{24}k^3 - \frac{35}{12}k^2 + \frac{125}{8}k - \frac{121}{4}\right) + 5 \ln v + 1}{\ln \frac{v^5}{v^5-1}} &       \frac{\ln\left(\frac{1}{20}k^4 - \frac{9}{8}k^3 + \frac{257}{24}k^2 - \frac{595}{12}k + \frac{456}{5}\right) + 6 \ln v + 1}{\ln \frac{v^6}{v^6-1}} \\ \hline
      \mbox{$N_{dens}$} & \frac{\ln\left(\frac{1}{6}k^2 - \frac{1}{6}k\right) + 3 \ln v}{\ln \frac{v^3}{v^3-1}}       & \frac{\ln\left(\frac{1}{24}k^3 - \frac{1}{8}k^2 + \frac{1}{12}k\right) + 4 \ln v}{\ln \frac{v^4}{v^4-1}}  & \frac{\ln\left(\frac{1}{120}k^4 - \frac{1}{20}k^3 + \frac{11}{120}k^2 - \frac{1}{20}k\right) + 5 \ln v}{\ln \frac{v^5}{v^5-1}}  & \frac{\ln\left(\frac{1}{720}k^5 - \frac{1}{72}k^4 + \frac{7}{144}k^3 - \frac{5}{72}k^2 + \frac{1}{30}\right) + 6 \ln v}{\ln \frac{v^6}{v^6-1}} \\
      \hline
    \end{array}
    }
  \]
  
\end{table}

\section{Ability of bounds to capture global properties of $H$}
\label{sec-cc-triangle}
In this section, we compare the two bounds for two families of hypergraphs for which VCANs are almost completely known and do not grow with $k$~\cite{vca}. The {\em cyclic consecutive hypergraph} is $H_c^{k,t} = (V,E)$ with $V = \{0, \ldots, k-1\}$ and $E = \{\{i, i+1 \bmod{k}, \ldots, i+t-1 \bmod{k}\} : 0 \le i \le k-1\}$; 
for many cyclic consecutive hypergraphs $\VCAN(H_c^{k,t},v)$ equals  $v^t$, and in all cases we know that $\VCAN(H_c^{k,t},v) \leq d_{t,v}$ for some $d_{t,v}$ that does {\em not} grow with $k$ \cite[Theorem 3.8]{vca}.

In $H_{c}^{k,t}$, each edge intersects exactly $2t-2$ other edges. Theorem~\ref{theorem-main} gives
\begin{equation} \label{ccca}
  N_{prob}(H_c^{k,t},v) = \left\lceil\frac{\ln(2t-1) + t \ln v +  1}{\ln \frac{v^t}{v^t-1}} \right\rceil.
  \end{equation}
In agreement with $d_{t,v}$, $N_{prob}$ is independent of $k$ and thus for fixed $v$ and $t$, it is $O(1)$.  On the other hand, the bound from algorithm \vardens is
\[
  N_{dens}(H_c^{k,t},v)= \left\lceil \frac{\ln k + t \ln v}{\ln \frac{v^t}{v^t-1}}\right\rceil,
  \]
which does grow with $k$.  Thus we have an example of a family of hypergraphs where  $N_{prob}$ is substantially better than  $N_{dens}$. The homomorphism construction (see~\cite[Chapter 3]{raaphorstphd})
gives $VCAN(H_{c}^{k,t},v)\leq \VCAN(H_{c}^{k',t},v)$, where $k' \leq 2t-1$ and the $\ln(2t-1)$ term in Equation~\ref{ccca} in place of the usual $\ln k$ term suggests that $N_{prob}$ ``recognizes'' this homomorphism while $N_{dens}$ does not. However, experiments show that running \vardens algorithm for $H_{c}^{k,t}$ does seem to generate arrays where the array size is independent of $k$ \cite{raaphorstphd}. 

  A {\em triangulation hypergraph of the sphere}, $T=(V,E)$ is a rank-3 hypergraph which corresponds to a planar graph all of whose faces are triangles;  the rank-3 hyperedges are precisely the faces of the planar embedding. From colourings and homomorphisms we know that $\VCAN(T,v)=v^3$~\cite{vca}.
If $T$ has $k$ vertices then  it has  $2k-4$ edges (triangles) and 
  a triangle may intersect up to $3(\Delta-2)$ other triangles where $\Delta$ is the maximum degree of the planar
graph. Thus, $d \leq \min\{3(\Delta-2),2k-5\}$ and 
  \begin{equation*}
    N_{prob}(T,v)\leq \left\lceil\frac{\ln(\min\{3(\Delta-2)+1,2k-4\}) + 3 \ln v + 1}{\ln \frac{v^3}{v^3-1}}\right\rceil,
  \end{equation*}
  while \vardens gives
  \[
    N_{dens}(T,v)=\left\lceil\frac{\ln(2k-4) + 3 \ln v }{\ln \frac{v^3}{v^3-1}}\right\rceil.
  \]
Thus, $N_{prob}$  only performs better than $N_{dens}$ when the maximum degree of the triangulation grows more slowly than the number of vertices.  In addition, unless $\Delta(T)$ is bounded as $k$ grows, $N_{prob}$ grows with $k$, even though $\VCAN(T,v)=v^3$. Hence, triangulation hypergraphs represent a case where, unlike cyclic consecutive hypergraphs, our application of the local lemma is unable to capture the behaviour of the homomorphism $H \rightarrow K_4^{(3)}$. The bound $N_{dens}$ also grows with $k$ and, in contrast to the previous class
of hypergraphs, 
experiments running \vardens on random triangulation hypergraphs suggest that the size of the arrays produced indeed increases as some function of $k$, see Table~\ref{tbl-prob-vcadbgatriangulations}.  For more details about generating the random triangulation hypergraphs see~\cite{raaphorstphd}.

\begin{table}[ht]
\begin{center}
\begin{tabular}{|r|rrr|rrr|rrr|} \hline
 & \multicolumn{3}{|c|}{$k=4$} & \multicolumn{3}{|c|}{$k=11$} & \multicolumn{3}{|c|}{$k=100$}\\ \cline{2-10}
$v$ & $N_m$ & $N_M$ & $N_a$ & $N_m$ & $N_M$ & $N_a$ & $N_m$ & $N_M$ & $N_a$ \\ \hline
2 & 8 & 10 & 8.37 & 8 & 11 & 9.50 & 12 & 14 & 13.13 \\
3 & 27 & 34 & 31.17 & 31 & 36 & 34.23 & 39 & 42 & 40.37 \\
5 & 142 & 152 & 146.30 & 150 & 157 &154.13 & 167 & 172 & 169.60 \\ \hline
\end{tabular}
\caption[\vardens results for triangulations]{\label{tbl-prob-vcadbgatriangulations} Trials of \vardens on random triangulations with $k$ vertices. $N_m$, $N_M$, and $N_a$ are the minimum, maximum, and mean array sizes, respectively.}
\end{center}
\end{table}

\section{Using the general and asymmetric local lemma}
\label{sec-gen-ll}

In Theorem~\ref{theorem-main}, we use the symmetric version of the local lemma to establish the existence of covering arrays.  In Section~\ref{sec-designs}, we apply this theorem to hypergraphs that are highly symmetric: the probability that any set of columns represented by a hyperedge be covered is precisely the same as for any other hyperedge, because all the hyperedges have the same size.  Additionally, for $(t-1)$-$(k,t,\lambda)$ designs the size of the sets of dependent events also does not vary; even for $s$-$(k,t,\lambda)$ designs the sizes of sets of dependent events can only vary in a fixed range.  In Section~\ref{sec-cc-triangle}, the hyperedges were again of a fixed size.  If the hyperedges themselves vary in size or the size of sets of dependent events significantly vary, using the symmetric version of the local lemma requires taking the worst probability of a set of columns being uncovered and requires taking the largest set of dependent events.   In this section, we explore the benefit of using versions of the local lemma that can adapt to varying sizes of hyperedges and sets of dependent events.

The most general statement of the local lemma was given by Lov\'{a}sz in 1975.
\begin{theorem}[Lov\'{a}sz local lemma - general case \cite{lovasz75}]
\label{thm-prob-generallocallemma}
\ \\
Let $\mathcal{A} = \{A_0, \ldots, A_{m-1}\}$ be a finite set of events in a probability space $\Omega$. Define a function $\Gamma\colon \mathcal{A} \rightarrow \mathcal{P}(\mathcal{A})$ such that for $A \in \mathcal{A}$, $A$ is independent from all events in $\mathcal{A} \setminus (\Gamma(A) \cup \{A\})$. If there is a map $x\colon \mathcal{A} \rightarrow (0,1)$ such that for all $A \in \mathcal{A}$:
\[\Pr(A) \le x(A)\prod_{B \in \Gamma(A)} (1 - x(B))\]
then the probability that none of the events occur is nonzero, and is:
\[\Pr(\overline{A_0} \wedge \ldots \wedge \overline{A_{m-1}}) \ge \prod_{A \in \mathcal{A}} (1-x(A)).\]
\end{theorem}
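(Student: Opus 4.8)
The plan is to follow the classical inductive argument of Lov\'{a}sz, establishing a stronger conditional statement from which the theorem follows by the chain rule. The key claim is that for every $A \in \mathcal{A}$ and every subset $S \subseteq \mathcal{A} \setminus \{A\}$ with $\Pr(\bigwedge_{B \in S}\overline{B}) > 0$, one has
\[
\Pr\!\left(A \;\middle|\; \bigwedge_{B \in S} \overline{B}\right) \le x(A);
\]
I would prove this by induction on $|S|$.

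For the base case $|S| = 0$, the hypothesis $\Pr(A) \le x(A)\prod_{B \in \Gamma(A)}(1-x(B))$ already gives $\Pr(A) \le x(A)$, since every factor $1-x(B)$ lies in $(0,1)$. For the inductive step I would partition $S = S_1 \cup S_2$ with $S_1 = S \cap \Gamma(A)$ and $S_2 = S \setminus \Gamma(A)$, and rewrite
\[
\Pr\!\left(A \;\middle|\; \bigwedge_{B \in S}\overline{B}\right) = \frac{\Pr\!\left(A \wedge \bigwedge_{B \in S_1}\overline{B} \;\middle|\; \bigwedge_{C \in S_2}\overline{C}\right)}{\Pr\!\left(\bigwedge_{B \in S_1}\overline{B} \;\middle|\; \bigwedge_{C \in S_2}\overline{C}\right)}.
\]
I would bound the numerator above by $\Pr(A \mid \bigwedge_{C \in S_2}\overline{C})$, which equals $\Pr(A)$ because $A$ is independent of the events outside $\Gamma(A) \cup \{A\}$ and $S_2$ consists of such events; hence the numerator is at most $x(A)\prod_{B \in \Gamma(A)}(1-x(B))$. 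For the denominator, writing $S_1 = \{B_1, \dots, B_r\}$ and expanding by the chain rule, each factor has the form $1 - \Pr(B_i \mid \bigwedge_{j < i}\overline{B_j} \wedge \bigwedge_{C \in S_2}\overline{C})$, whose conditioning set has fewer than $|S|$ elements, so the inductive hypothesis gives $\ge 1 - x(B_i)$; thus the denominator is at least $\prod_{B \in S_1}(1-x(B)) \ge \prod_{B \in \Gamma(A)}(1-x(B))$, the last inequality holding since $S_1 \subseteq \Gamma(A)$ and each factor is at most $1$. Dividing yields the claim, and the same induction shows each such denominator is strictly positive, so every conditional probability in sight is well defined.

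Finally, I would conclude via the chain rule that
\[
\Pr\!\left(\bigwedge_{i=0}^{m-1}\overline{A_i}\right) = \prod_{i=0}^{m-1}\left(1 - \Pr\!\left(A_i \;\middle|\; \bigwedge_{j<i}\overline{A_j}\right)\right) \ge \prod_{i=0}^{m-1}\bigl(1-x(A_i)\bigr) > 0,
\]
applying the claim to each conditional term. I expect the main obstacle to be the bookkeeping in the inductive step: splitting $S$ correctly according to $\Gamma(A)$, verifying that the independence hypothesis lets the numerator collapse to $\Pr(A)$, and checking that every conditioning set arising in the expansion of the denominator is strictly smaller than $S$ so that the induction applies --- together with the parallel check that none of the conditioning events has probability zero, which is what makes all the conditional probabilities meaningful.
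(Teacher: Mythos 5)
Your argument is the classical inductive proof of the general local lemma (the standard one going back to Lov\'{a}sz and Spencer), and it is correct: the induction on $|S|$, the split of $S$ into $S_1 = S \cap \Gamma(A)$ and $S_2 = S \setminus \Gamma(A)$, the collapse of the numerator to $\Pr(A)$ by independence, and the chain-rule expansion of the denominator with the inductive bound $1 - x(B_i)$ are exactly the right steps, and the final chain-rule product gives the stated lower bound $\prod_{A \in \mathcal{A}}(1-x(A)) > 0$. Note that the paper itself offers no proof of this statement --- it is quoted from the literature --- so there is nothing to compare against beyond observing that yours is the standard argument; the only point worth making explicit in a write-up is that the hypothesis must be read as \emph{mutual} independence of $A$ from the collection $\mathcal{A} \setminus (\Gamma(A) \cup \{A\})$ (so that conditioning on any conjunction $\bigwedge_{C \in S_2}\overline{C}$ leaves $\Pr(A)$ unchanged), since independence from each such event separately would not suffice for the numerator step.
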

\noindent The Asymmetric local lemma was given by Habib in 1998.
\begin{theorem}[Lov\'{a}sz local lemma - asymmetric case \cite{habib1998}]
\label{thm-prob-asymlocallemma}
\ \\
Let $\mathcal{A} = \{A_0, \ldots, A_{m-1}\}$ be a finite set of events in a probability space $\Omega$. Define a function $\Gamma:\mathcal{A} \rightarrow \mathcal{P}(\mathcal{A})$ such that for $A \in \mathcal{A}$, $A$ is independent from all events in $\mathcal{A} \setminus (\Gamma(A) \cup \{A\})$. If, for each $0 \le i < m$, we have that both:
\begin{enumerate}
\item $\Pr(A_i) \le \frac{1}{8}$
\item $\sum_{A_j \in \Gamma(A_i)} \Pr(A_j) \le \frac{1}{4}$
\end{enumerate}
then the probability that none of the events occur is positive.
\end{theorem}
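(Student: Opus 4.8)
The plan is to obtain the asymmetric statement (Theorem~\ref{thm-prob-asymlocallemma}) as a corollary of the general local lemma already recorded as Theorem~\ref{thm-prob-generallocallemma}, simply by producing a suitable weight function $x$. Before doing so I would dispose of a harmless edge case: if $\Pr(A_i)=0$ for some $i$, delete $A_i$ from $\mathcal{A}$ and from every set $\Gamma(A_j)$. This only weakens the independence requirement and only decreases the sums in hypothesis~2, so the reduced family still satisfies both hypotheses; and since a probability-zero event a.s.\ does not occur, a positive lower bound on the probability that none of the surviving events occur gives the same bound for the original family. So I may assume $0<\Pr(A_i)$ for every $i$.

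Now set $x(A_i):=2\Pr(A_i)$. By hypothesis~1 this gives $x(A_i)\le \tfrac14$, so $x$ indeed maps $\mathcal{A}$ into $(0,1)$, as required by Theorem~\ref{thm-prob-generallocallemma}. It then remains to verify, for each $i$, that
\[
\Pr(A_i)\;\le\;x(A_i)\prod_{A_j\in\Gamma(A_i)}\bigl(1-x(A_j)\bigr);
\]
dividing through by $\Pr(A_i)>0$, this is equivalent to $\prod_{A_j\in\Gamma(A_i)}\bigl(1-2\Pr(A_j)\bigr)\ge\tfrac12$.

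For the last step I would invoke the elementary inequality $1-y\ge 4^{-y}$, valid on $y\in[0,\tfrac12]$ because $4^{-y}$ is convex and coincides with the affine function $1-y$ at the two endpoints $y=0$ and $y=\tfrac12$. Since $2\Pr(A_j)\le\tfrac14\le\tfrac12$ by hypothesis~1, applying it with $y=2\Pr(A_j)$ yields
\[
\prod_{A_j\in\Gamma(A_i)}\bigl(1-2\Pr(A_j)\bigr)\;\ge\;\prod_{A_j\in\Gamma(A_i)}4^{-2\Pr(A_j)}\;=\;4^{-2\sum_{A_j\in\Gamma(A_i)}\Pr(A_j)}\;\ge\;4^{-1/2}=\tfrac12,
\]
where the final inequality is hypothesis~2 (the empty-neighbourhood case being trivial since then the product is $1$). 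This confirms the hypothesis of Theorem~\ref{thm-prob-generallocallemma}, which then gives $\Pr(\overline{A_0}\wedge\cdots\wedge\overline{A_{m-1}})\ge\prod_{A\in\mathcal{A}}(1-x(A))>0$, as claimed.

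I do not anticipate a real obstacle: essentially all the content lies in the choice of weight $x(A)=2\Pr(A)$ and the one-line convexity estimate, and the constants $\tfrac18$ and $\tfrac14$ in the statement are precisely calibrated to make that choice succeed. The only point needing a little care is the well-definedness of $x$ into the \emph{open} interval $(0,1)$, which is why I first strip out probability-zero events; alternatively one can observe that Theorem~\ref{thm-prob-generallocallemma} stays valid when $x$ is permitted to take the value $0$, which removes the need for that reduction altogether.
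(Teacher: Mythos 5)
Your derivation is correct. Note that the paper does not prove Theorem~\ref{thm-prob-asymlocallemma} at all: it is quoted as a known result with a citation to the literature, so there is no in-paper argument to compare against. Your route---taking $x(A_i)=2\Pr(A_i)$ and verifying the hypothesis of the general lemma (Theorem~\ref{thm-prob-generallocallemma}) via the convexity estimate $1-y\ge 4^{-y}$ on $[0,\tfrac12]$---is the standard way the asymmetric form is deduced from the general form, and every step checks out: hypothesis~1 gives $2\Pr(A_j)\le\tfrac14\le\tfrac12$ so the estimate applies, hypothesis~2 gives $4^{-2\sum_{A_j\in\Gamma(A_i)}\Pr(A_j)}\ge 4^{-1/2}=\tfrac12$, and the reduction discarding probability-zero events (or, equivalently, allowing $x$ to take the value $0$) correctly handles the well-definedness of $x$ into $(0,1)$. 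One small observation your argument makes visible: you only ever use $\Pr(A_i)\le\tfrac14$ (to get $2\Pr(A_i)\le\tfrac12$), so the stated constant $\tfrac18$ in hypothesis~1 is stronger than your proof needs; this is consistent with the theorem as stated, which is simply a weaker (safer) set of hypotheses.
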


For any particular hypergraph, using these more general versions of the local lemma can require establishing complicated bounds on the probabilities and size of sets of dependent events which are different when the hypergraph is changed. We choose to focus on one specific hypergraph to highlight the challenges and benefits of this approach. Let $H_{15}$ be the hypergraph with vertex set $\{a,b,c,d,e,f,g,h,i,j,k,0,1,2,3\}$ and edge set containing the four hyperedges of rank 3 from $\{0,1,2,3\}$ and all possible edges of rank 2 that are not contained within any  hyperedge of rank 3.  Cohen et al.~\cite{cohen2003vsi} previously studied this hypergraph and other similar ones 
and used simulated annealing to construct variable strength covering arrays over them.

With $n$ rows, the probability of missing coverage on any pair of columns is
\[
  p(A) \le v^2\left(\frac{v^2-1}{v^2}\right)^n.
\]
The probability of missing coverage on one of the four sets of three columns is
\[
  p(A) \le v^3\left(\frac{v^3-1}{v^3}\right)^n.
\]
To determine the sets of dependent events we classify the hyperedges into three kinds.  $E_1$ is the set of pairs from $\{a,b,c,d,e,f,g,h,i,j,k\}$, $E_2$ is the set of pairs from $\{a,b,c,d,e,f,g,h,i,j,k\} \times \{0,1,2,3\}$ and $E_3$ is the set of triples from $\{0,1,2,3\}$.  The number of events of each type that are dependent on other types is summarized in Table~\ref{tbl-prob-asymexdeps}.  The entry in row $E_i$ and column $E_j$ is the number of $E_j$ events that are dependent on an $E_i$ event.

\begin{table}[ht]
\begin{center}
\begin{tabular}{|r|r|r|r|} \hline
& $E_1$ & $E_2$ & $E_3$ \\ \hline
$E_1$ & $18$ & $8$ & 0 \\ \hline
$E_2$ & $10$ & $13$ & $3$ \\ \hline
$E_3$ & 0 & $33$ & $3$ \\ \hline
\end{tabular}
\caption{\label{tbl-prob-asymexdeps} Bad event dependency counts on hypergraph $H_{15}$.}
\end{center}
\end{table}

We start by applying the symmetric local lemma for this hypergraph.  For  $A_1 \in E_1$, $|\Gamma(A_1)| = 26$; for $A_2 \in E_2$, $|\Gamma(A_2)| = 26$; and for $A_3 \in E_3$, $|\Gamma(A_3)| = 36$. Thus all bad events are avoided when:
\[
n \geq \frac{3 \ln v + \ln 37 + 1}{3\ln v - \ln(v^3-1)}.
\]
These values of $n$ are given in Table~\ref{tbl-prob-exbinsearch} in column $n_s$ for $v= 2, \ldots, 10$.

To use the general form of the local lemma, as in Theorem~\ref{thm-prob-generallocallemma}, we need to find a function $x\colon \mathcal{A} \rightarrow (0,1)$ such that, for each $A \in \mathcal{A}$:
\begin{equation}
  \Pr(A) \le x(A) \prod_{B \in \Gamma(A)} (1 - x(B)). \label{eqn-gen-prob}
\end{equation}
If such a function $x$ exists, then the local lemma guarantees that the probability that all bad events can be avoided is nonzero. We look for a function $x$ that is fixed on each $E_i$.  Equation~(\ref{eqn-gen-prob}) gives the following system of inequalities:  
\begin{align*}
v^2\left(\frac{v^2-1}{v^2}\right)^n &\le x_1 (1-x_1)^{18} (1-x_2)^8,  \\
v^2\left(\frac{v^2-1}{v^2}\right)^n &\le x_2 (1-x_1)^{10} (1-x_2)^{13} (1-x_3)^3,  \\
v^3\left(\frac{v^3-1}{v^3}\right)^n &\le x_3 (1-x_2)^{33} (1-x_3)^3. 
\end{align*}
No closed form is apparent so we solved this system numerically using OpenOpt \cite{openopt}.  We provide the results in column $n_g$ of Table~\ref{tbl-prob-exbinsearch}.  For details of the solution technique see~\cite{raaphorstphd}.

Since the solutions for the system of equations from the general local lemma can be quite difficult to produce \cite{habib1998}, we also consider the asymmetric local lemma, as in Theorem~\ref{thm-prob-asymlocallemma}.  This gives the following system of inequalities:
\begin{align*}
v^2\left(\frac{v^2-1}{v^2}\right)^n &\le \frac{1}{4},\\
v^3\left(\frac{v^3-1}{v^3}\right)^n &\le \frac{1}{4},\\
26\left[v^2\left(\frac{v^2-1}{v^2}\right)^n\right] & \le \frac{1}{8}, \\
33\left[v^2\left(\frac{v^2-1}{v^2}\right)^n\right] + 3\left[v^3\left(\frac{v^3-1}{v^3}\right)^n\right] & \le \frac{1}{8}.
\end{align*}
which can be solved more directly.  Details can be found in \cite{raaphorstphd}.  The results are given in column $n_a$ of Table~\ref{tbl-prob-exbinsearch}.

\begin{table}[t]
\begin{center}
\begin{tabular}{|r|r|r|r|r|r|} \hline
$v$ & $n_{s}$ & $n_g$ & $n_a$ & $p_{g,s}$  (\%)& $p_{a,s}$  (\%)\\ \hline
2  & 50.10    & 33.79   & 34.38   & 32.55 & 31.37 \\
3  & 209.50   & 148.30  & 153.17  & 29.21 & 26.88 \\
4  & 556.87   & 407.02  & 421.87  & 26.91 & 24.24 \\
5  & 1175.17  & 881.51  & 910.49  & 24.99 & 22.52 \\
6  & 2152.02  & 1643.10 & 1693.86 & 23.65 & 21.28 \\
7  & 3578.65  & 2777.33 & 2850.50 & 22.39 & 20.34 \\
8  & 5549.38  & 4367.67 & 4461.93 & 21.29 & 19.59 \\
9  & 8161.08  & 6440.68 & 6612.28 & 21.08 & 18.97 \\
10 & 11512.91 & 9171.64 & 9387.96 & 20.34 & 18.45 \\ \hline
\end{tabular}
\caption{\label{tbl-prob-exbinsearch} Different values of $n$ obtained for $H_{15}$ using the symmetric, general and asymmetric local lemmas.}
\end{center}
\end{table}

Table~\ref{tbl-prob-exbinsearch} gives the values of $n$ obtained for $H_{15}$ using each of the three versions of the local lemma.  Column $p_{g,s}$ gives the percentage improvement of the general local lemma with respect to the symmetric local lemma.  Column $p_{a,s}$ gives the percentage improvement of the asymmetric local lemma with respect to the symmetric local lemma.  The results obtained from the general local lemma showed significant improvement over those from the symmetric local lemma, with an average improvement of 24.71\% and a median improvement of 23.65\%. We note, however, that considerable work went into finding valid functions $x\colon \mathcal{A} \rightarrow (0,1)$ satisfying the conditions in Theorem~\ref{thm-prob-generallocallemma}. The process we used would be hard to automate and was highly intensive, requiring significant manual experimentation and interaction. The asymmetric local lemma gives results that are slightly worse than those given by the general local lemma. On average, they are within 2.76\% (median: 2.66\%) of those given by the general local lemma. While this difference is small, the improvement that the use of the asymmetric local lemma gives over the use of the symmetric local lemma is considerably more significant: the asymmetric local lemma is, on average, an improvement of 22.63\% (median: 21.29\%) over the symmetric local lemma.  We believe that when the hypergraph for a variable strength covering array lacks uniformity with respect to the size of edges or the sizes of their neighbourhoods, there is benefit using the general and asymmetric forms of the local lemma.  

As a final comparison, we compare the size of covering arrays given by the local lemma to the guaranteed bound of \vardens and also to results of running {\sc VarDens}. The results are shown in  Figure~\ref{fig-prob-regression}.  The best local lemma bounds (from the general local lemma) are better than the theoretical guarantee of \vardens ($N_{dens}$) but the actual size of arrays given by running \vardens is even better.


\begin{figure}
\begin{center}
\includegraphics[width=12cm]{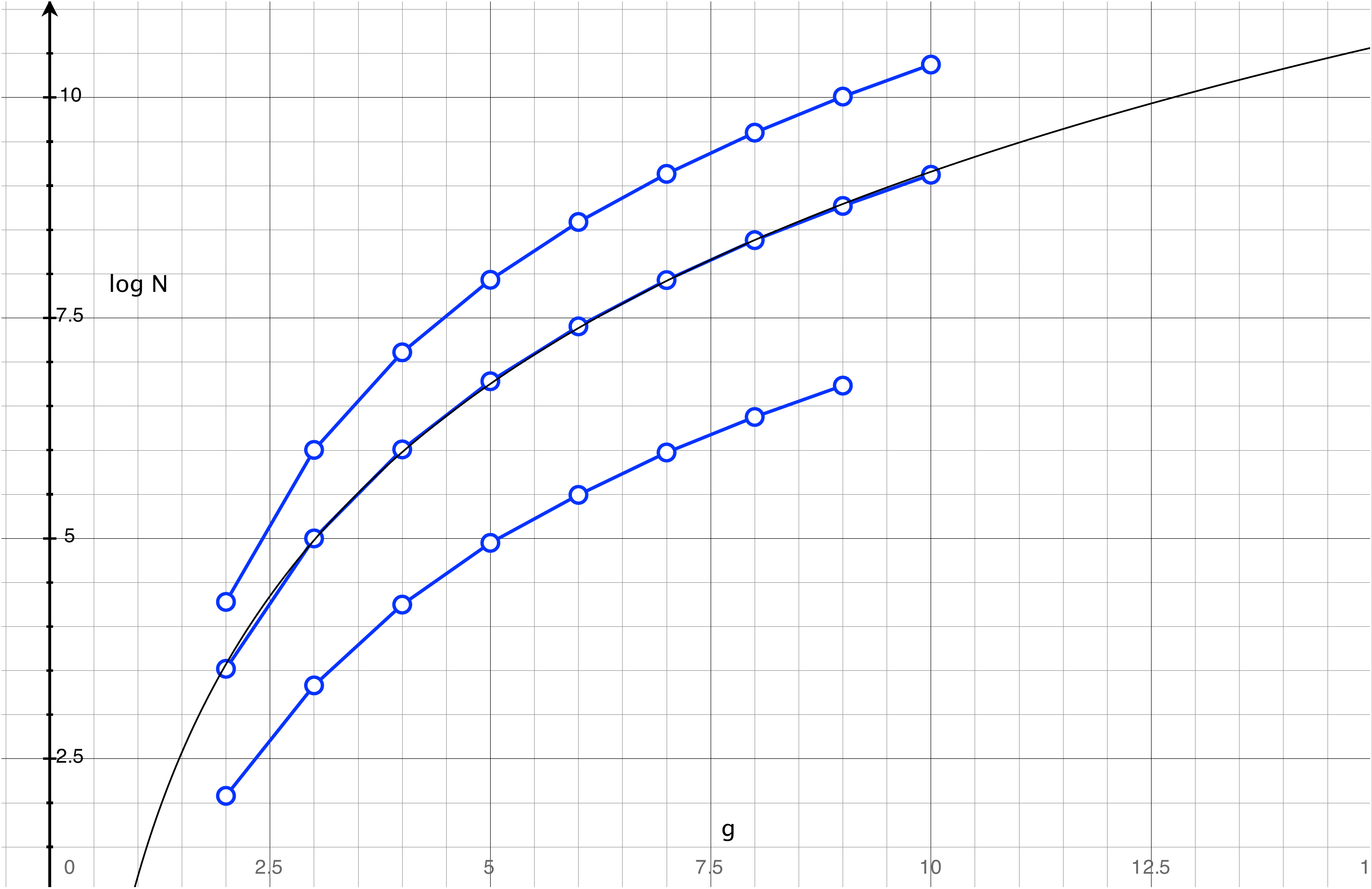}
\caption[Comparison of \vardens and the general local lemma]{\label{fig-prob-regression} A comparison of the \vardens bound (upper line), the general local lemma bounds (middle line), and the actual results from running \vardens on $H_{15}$ (lower line). The horizontal axis is the number of levels and the vertical axis represents the natural logarithm of the size of the arrays.}
\end{center}
\end{figure}

\section{Conclusion}

We used variants of the Lov\'{a}sz local lemma to find upper bounds on the sizes of variable strength covering arrays and compared to the ones constructed using {\sc VarDens}, a derandomized greedy construction, and its guaranteed upper bound.  Our main result, Theorem~\ref{theorem-main}, is a general upper bound on the size of a variable strength covering array in term of the parameters of the associated hypergraph obtained via the symmetric local lemma. When nothing is known about the hypergraph these two bounds are very similar.

The bounds obtained from the symmetric local lemma work best when the hypergraph's edges are of a fixed size and the number of edges intersecting an edge is invariant.
For example, for $t$-designs the local lemma bound is better than the \vardens bound by a constant that depends on $t$. A more extreme example are the cyclic consecutive hypergraphs for which the local lemma bound, unlike \vardens bound, remains constant when we let $k$ grow. We suggest that when edges are of varying sizes, the general and asymmetric versions of the local lemma may work best, and demonstrate this fact with an example. We note that in some instances the \vardens bound is much worse than actual runs of \vardens on given inputs, and in several cases these runs outperform the local lemma bound.

One direction for future research is examining how much of the recent improvements in the application of both the local lemma and density based greedy algorithm for standard covering arrays \cites{MR3328867,francetic2016,sarkar_upper_2017,colbourn_two-stage_nodate_alt,colbourn_asymptotic_nodate} can be extended to variable strength covering arrays.  Another direction is to continue the exploration of the general and asymmetric local lemmas and their utility for variable strength covering arrays. Perhaps some families of hypergraphs are more amenable to the general case and closed form solutions for the required function exist.  The weighted local lemma is also deserving of attention \cite{habib1998}.

\bigskip
\noindent
{\bf Acknowledgements}\\
Lucia Moura and Brett Stevens were  supported by NSERC Discovery grants. The authors would like to thank an anonymous reviewer for various suggestions that improved the presentation of this paper.

\bibliographystyle{elsarticle-num} 

\bibliography{./all.bib}





\end{document}